\documentclass[preprint,12pt]{elsarticle}

\usepackage[T1]{fontenc}
\usepackage[utf8]{inputenc}
\usepackage{lmodern}
\usepackage{microtype}
\usepackage{amsmath,amssymb,amsthm,mathtools}
\usepackage[hidelinks]{hyperref}

\biboptions{sort&compress}

\newtheorem{theorem}{Theorem}[section]
\newtheorem{lemma}[theorem]{Lemma}
\newtheorem{proposition}[theorem]{Proposition}
\newtheorem{corollary}[theorem]{Corollary}
\newtheorem{conjecture}[theorem]{Conjecture}
\theoremstyle{remark}
\newtheorem{remark}[theorem]{Remark}
\newtheorem{example}[theorem]{Example}

\DeclareMathOperator{\diam}{diam}
\DeclareMathOperator{\conv}{conv}

\journal{European Journal of Combinatorics}

\hypersetup{
  pdftitle={Counterexamples to the Corsten--Frankl conjecture on diameter-Ramsey simplices},
  pdfauthor={Yaping Mao},
  pdfsubject={Diameter-Ramsey simplices},
  pdfkeywords={diameter-Ramsey set, simplex, circumcenter, Cartesian product, Euclidean Ramsey theory}
}

\begin{document}

\begin{frontmatter}

\title{Counterexamples to the Corsten--Frankl conjecture on diameter-Ramsey simplices}

\author[aff1]{Yaping Mao\corref{cor1}\fnref{fn1}}
\cortext[cor1]{Corresponding author.}
\fntext[fn1]{Supported by the National Science Foundation of China (Nos.\ 12471329 and 12061059).}
\ead{yapingmao@outlook.com; myp@qhnu.edu.cn}

\address[aff1]{Academy of Plateau Science and Sustainability, and School of Mathematics and Statistics, Qinghai Normal University, Xining, Qinghai 810008, China}

\begin{abstract}
Corsten and Frankl conjectured that a simplex is diameter-Ramsey if and only if its circumcenter lies in its convex hull. We disprove this conjecture in every dimension $d\ge 3$. The main tool is a sufficient criterion based on a higher-order deficit decomposition: if the squared deficits $D^2-\|p_i-p_j\|^2$ admit a nonnegative decomposition over subsets of the vertex set, with total mass at most $D^2$, then the simplex is diameter-Ramsey. The pairwise deficit criterion of Frankl--Pach--Reiher--R\"odl is recovered as a special case. As an application, for every $d\ge 3$ we construct a diameter-Ramsey $d$-simplex whose circumcenter lies outside its convex hull. A particularly simple family has squared edge lengths
$\|p_1-p_2\|^2=\|p_1-p_j\|^2=7~ (4\le j\le d+1)$,
$\|p_1-p_3\|^2=4$,
$\|p_i-p_j\|^2=4 ~ (2\le i<j\le d+1)$.
\end{abstract}

\begin{keyword}
Diameter-Ramsey set \sep Simplex \sep Circumcenter \sep Cartesian product \sep Euclidean Ramsey theory
\MSC[2020] 05D10 \sep 51M20 \sep 52B11
\end{keyword}

\end{frontmatter}

\section{Introduction}

\noindent Euclidean Ramsey theory considers
the problems that
for any positive integer $r$ and any configuration $K$,
does there exist an integer $n_0$ such that
for any $n \geq n_0$ and any $r$-coloring of $\mathbb{E}^n$,
there exists a monochromatic configuration congruent to $K$?
Erd\H{o}s, Graham, Montgomery, Rothschild, Spencer, and
Straus \cite{EGMRSS73} in 1973 first
investigated this problem.
We refer the reader to survey papers \cite{GrahamTressler, GrahamOldNew,
GrahamRecentTrend}.

For a positive integer $m$, write $[m]:=\{1,\dots,m\}$. A finite set $A\subset \mathbb R^m$ is called \emph{diameter-Ramsey} if, for every positive integer $q$, there are an integer $N$ and a finite set $R\subset \mathbb R^N$ such that
\[
\diam(R)=\diam(A)
\]
and every $q$-coloring of $R$ contains a monochromatic subset congruent to $A$. We write $R\to(A)_q$ for this Ramsey property.

Throughout the paper, by a \emph{simplex} we mean its vertex set. Thus an $(n-1)$-simplex is a set
\[
A=\{p_1,\dots,p_n\}
\]
of $n$ affinely independent points in a Euclidean space. Its circumcenter is the center of the unique sphere through its vertices.

Frankl, Pach, Reiher, and R\"odl introduced diameter-Ramsey sets and proved, among other things, that regular simplices are diameter-Ramsey, that Cartesian products of diameter-Ramsey sets are again diameter-Ramsey, and that a pairwise deficit condition suffices for simplices to be diameter-Ramsey \cite{FPRR}. Corsten and Frankl later proved that every finite spherical set with circumradius strictly larger than $\diam(A)/\sqrt{2}$ is not diameter-Ramsey \cite{CF}. Motivated by this, they \cite{CF} proposed the following conjecture.  
\begin{conjecture}
A simplex is diameter-Ramsey if and only if its circumcenter lies in its convex hull.   
\end{conjecture}

The purpose of this paper is to show that this conjecture fails in every dimension $d\ge 3$.

Our main structural observation is a sufficient criterion that may be viewed as a \emph{higher-order deficit decomposition}. Let
\[
A=\{p_1,\dots,p_n\}
\]
be a simplex with diameter $D$, and define its pairwise deficits by
\[
\delta_{ij}:=D^2-\|p_i-p_j\|^2\qquad (1\le i<j\le n).
\]
We show that if the deficits $\delta_{ij}$ admit a suitable nonnegative decomposition over subsets of $[n]$, with total mass at most $D^2$, then $A$ is diameter-Ramsey. The criterion from \cite[Section~4]{FPRR} is recovered when only two-element subsets are used.

\section{Basic lemmas}
We begin with two standard facts and include short proofs for completeness.

\begin{lemma}\label{lem:regular}
Every regular simplex is diameter-Ramsey.
\end{lemma}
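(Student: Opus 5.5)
We prove that every regular simplex is diameter-Ramsey by building the host set explicitly. Let $A$ be a regular $(n-1)$-simplex with edge length $D$; after scaling, $D=\sqrt2$, so $A$ is congruent to the standard basis $\{e_1,\dots,e_n\}\subset\mathbb R^n$. Fix $q$. We take $N$ large and set
\[
R:=\{e_S\in\mathbb R^N : S\subseteq[N],\ |S|=k\},\qquad e_S:=\tfrac{1}{\sqrt{k}}\sum_{s\in S}e_s ,
\]
the normalized $0/1$ vectors of weight $k$. Here $k$ and $N$ will be chosen via Ramsey's theorem.

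The first step is the distance computation. For $|S|=|T|=k$ we have $\|e_S-e_T\|^2=\frac{2(k-|S\cap T|)}{k}\le 2$, with equality exactly when $S\cap T=\emptyset$. Hence $\diam(R)=\sqrt2=\diam(A)$, provided $N\ge 2k$. Moreover, any $n$ pairwise disjoint $k$-sets $S_1,\dots,S_n$ give points $e_{S_1},\dots,e_{S_n}$ at pairwise distance $\sqrt2$, which is a copy of $A$. So it suffices to show that every $q$-coloring of $\binom{[N]}{k}$ contains $n$ pairwise disjoint $k$-sets of the same color.

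The second step is to get the disjoint sets. The cleanest choice is to take $k=1$? No: with $k=1$ the set $R$ is only $N$ points, and a coloring with each color class of size less than $n$ defeats us once $N<qn$. That is fine, though, since $N$ may grow with $q$. Take $k=1$ and $N=q(n-1)+1$. Then $R=\{e_1,\dots,e_N\}$ is itself a regular simplex of diameter $\sqrt2$. By pigeonhole, some color class contains at least $n$ of the points $e_i$, and any $n$ of them form a copy of $A$.

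The only point needing care is that the diameter is preserved exactly. This holds because all pairwise distances in $\{e_1,\dots,e_N\}$ equal $\sqrt2$. Rescaling back to edge length $D$ finishes the argument. No real obstacle remains: the general weight-$k$ construction above is unnecessary, and the pigeonhole version suffices.
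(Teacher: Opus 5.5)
Your final argument is correct and is essentially the paper's proof: take a regular simplex with $q(n-1)+1$ vertices and the same edge length as $A$, so the diameter matches exactly, and use pigeonhole to find $n$ vertices of one color, which form a congruent copy of $A$. The weight-$k$ construction and the false start in your second step are unnecessary and should be deleted from a written version.
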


\begin{proof}
Let $\Delta_k$ be a regular $k$-simplex of side length $\ell$, and fix a positive integer $q$. Consider a regular simplex $V$ with $qk+1$ vertices and side length $\ell$. Then $\diam(V)=\ell=\diam(\Delta_k)$. In every $q$-coloring of $V$, some color appears on at least $k+1$ vertices. Any $k+1$ vertices of a regular simplex form a regular $k$-simplex of side length $\ell$, so $V$ contains a monochromatic copy of $\Delta_k$. Hence $\Delta_k$ is diameter-Ramsey.
\end{proof}

\begin{lemma}\label{lem:product}
If $X\subset\mathbb R^m$ and $Y\subset\mathbb R^n$ are diameter-Ramsey, then their Cartesian product
\[
X\times Y:=\{(x,y):x\in X,\ y\in Y\}\subset\mathbb R^{m+n},
\]
with the Euclidean product metric, is diameter-Ramsey. Moreover,
\[
\diam(X\times Y)^2=\diam(X)^2+\diam(Y)^2.
\]
\end{lemma}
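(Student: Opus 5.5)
The diameter identity comes first, since it is needed to compare diameters. For $(x,y),(x',y')\in X\times Y$ we have $\|(x,y)-(x',y')\|^2=\|x-x'\|^2+\|y-y'\|^2\le \diam(X)^2+\diam(Y)^2$. Equality is attained by taking $x,x'$ realizing $\diam(X)$ and $y,y'$ realizing $\diam(Y)$; both sets are finite, so the maxima exist. This proves $\diam(X\times Y)^2=\diam(X)^2+\diam(Y)^2$. Applied to arbitrary finite sets $R_X,R_Y$, the same computation gives $\diam(R_X\times R_Y)^2=\diam(R_X)^2+\diam(R_Y)^2$.

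For the Ramsey property I would use the standard product argument. Fix $q$. Since $Y$ is diameter-Ramsey, choose a finite $R_Y$ with $\diam(R_Y)=\diam(Y)$ and $R_Y\to(Y)_q$. Let $M:=|X|^{|R_Y|}$ and put $Q:=q^{|R_Y|}$. Since $X$ is diameter-Ramsey, choose a finite $R_X$ with $\diam(R_X)=\diam(X)$ and $R_X\to(X)_Q$. Set $R:=R_X\times R_Y$. By the first paragraph, $\diam(R)=\diam(X\times Y)$.

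Now let $c\colon R\to[q]$ be a coloring. It induces a coloring $\chi$ of $R_X$ by $\chi(u):=(c(u,v))_{v\in R_Y}$. This coloring uses at most $Q$ colors. Hence there is an isometric copy $X'\subset R_X$ of $X$ on which $\chi$ is constant. That means that for each $v\in R_Y$, the color $c(u,v)$ does not depend on $u\in X'$. Call this common value $c'(v)$. Since $c'$ is a $q$-coloring of $R_Y$, there is a monochromatic copy $Y'\subset R_Y$ of $Y$ under $c'$. Then $X'\times Y'$ is monochromatic under $c$. Moreover, if $\phi\colon X\to X'$ and $\psi\colon Y\to Y'$ are isometries, then $\phi\times\psi$ is an isometry $X\times Y\to X'\times Y'$, again by the product-metric formula. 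So $X'\times Y'$ is congruent to $X\times Y$.

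One point needs care: the ambient dimensions. Here $R\subset\mathbb R^{N_X+N_Y}$, and "congruent" should mean isometric as finite subsets of Euclidean space. Every isometry between finite subsets of Euclidean space extends to a rigid motion of the ambient space, so abstract isometry suffices. The unused quantity $M$ can be dropped; only $Q$ matters. I expect no real obstacle here. The only thing to get right is the order of the choices: $R_Y$ must be fixed before $R_X$, so that the number of colors $Q$ depends only on $|R_Y|$.
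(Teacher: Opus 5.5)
Your proof is correct. It uses the same product-Ramsey idea as the paper, but with the bookkeeping arranged the other way around.

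You fix the $Y$-witness $R_Y$ first. You then color each $u\in R_X$ by its whole fiber pattern $(c(u,v))_{v\in R_Y}\in[q]^{R_Y}$, so you need $R_X\to(X)_{q^{|R_Y|}}$. One application of this gives a copy $X'$ on which all fibers are constant at once, and one application of $R_Y\to(Y)_q$ to the induced coloring $c'$ finishes.

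The paper instead fixes the $X$-witness $S$ first and applies $S\to(X)_q$ separately inside each fiber $S\times\{y\}$. It then colors $y\in T$ by the chosen pair (copy of $X$, color), so $T$ only needs $M=q\cdot\#\{X'\subseteq S: X'\cong X\}$ colors.

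For the existence statement the two are interchangeable. Your auxiliary coloring is canonical, with no choice of a monochromatic copy per fiber. The paper's needs a number of colors polynomial in $|S|$ rather than exponential in $|R_Y|$, which would only matter for quantitative bounds.

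You also check explicitly that $\phi\times\psi$ is an isometry and that the diameter identity holds for $X\times Y$ itself. The paper leaves both implicit; it only computes $\diam(S\times T)$.
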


\begin{proof}
Fix a positive integer $q$. Since $X$ is diameter-Ramsey, there is a finite set $S$ such that
\[
\diam(S)=\diam(X)
\qquad\text{and}\qquad
S\to(X)_q.
\]
Let $\mathcal P$ be the finite set of all pairs $(X',c)$ where $X'\subseteq S$, $X'\cong X$, and $c\in[q]$. Put $M:=|\mathcal P|$.

Since $Y$ is diameter-Ramsey, there is a finite set $T$ such that
\[
\diam(T)=\diam(Y)
\qquad\text{and}\qquad
T\to(Y)_M.
\]
We claim that $S\times T$ witnesses that $X\times Y$ is diameter-Ramsey for $q$ colors.

Let $\chi:S\times T\to[q]$ be a $q$-coloring. For each $y\in T$, the coloring
\[
x\longmapsto \chi(x,y)\qquad (x\in S)
\]
contains a monochromatic copy $X_y\subseteq S$ congruent to $X$. Choose one such copy and denote its color by $c_y$. Thus each $y\in T$ receives the auxiliary color
\[
\phi(y):=(X_y,c_y)\in\mathcal P.
\]
Since $T\to(Y)_M$, there exists a subset $Y'\subseteq T$ congruent to $Y$ on which $\phi$ is constant. Hence there are a fixed copy $X_*\subseteq S$ congruent to $X$ and a fixed color $c_*$ such that $X_y=X_*$ and $c_y=c_*$ for all $y\in Y'$. Therefore every point of $X_*\times Y'$ has color $c_*$, and $X_*\times Y'$ is a monochromatic copy of $X\times Y$.

The formula for the diameter follows from the product metric:
\[
\diam(S\times T)^2=\diam(S)^2+\diam(T)^2
=\diam(X)^2+\diam(Y)^2.
\]
Therefore $X\times Y$ is diameter-Ramsey.
\end{proof}

By induction, Lemma~\ref{lem:product} applies to every finite Cartesian product of diameter-Ramsey sets.

\section{A higher-order deficit criterion}

\begin{theorem}\label{thm:criterion}
Let $A=\{p_1,\dots,p_n\}$ be an $(n-1)$-simplex with $n\ge 2$, and let
\[
D:=\diam(A).
\]
Relabel the vertices so that $\|p_1-p_2\|=D$. Let
\[
\mathcal B:=\{B\subseteq[n]: |B|\ge 2\ \text{and}\ \{1,2\}\nsubseteq B\}.
\]
Suppose that there are nonnegative real numbers $\alpha_B$, indexed by $B\in\mathcal B$, such that
\begin{equation}\label{eq:deficits}
D^2-\|p_i-p_j\|^2
=
\sum_{\substack{B\in\mathcal B\\ \{i,j\}\subseteq B}}\alpha_B
\qquad (1\le i<j\le n),
\end{equation}
and
\begin{equation}\label{eq:mass}
\sum_{B\in\mathcal B}\alpha_B\le D^2.
\end{equation}
Then $A$ is diameter-Ramsey.
\end{theorem}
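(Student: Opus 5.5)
The plan is to realize $A$, up to congruence, as a subset of a Cartesian product of regular simplices whose squared diameters add up to exactly $D^2$. Lemma~\ref{lem:regular} and Lemma~\ref{lem:product} then make the product diameter-Ramsey with diameter $D$, and the property passes down to $A$ because $A$ sits inside the product with the same diameter.

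\emph{Construction.} Discard every $B$ with $\alpha_B=0$. For each remaining $B\in\mathcal B$, let $\Delta_B$ be a regular simplex with side length $\sqrt{\alpha_B}$. Its vertex set is indexed by the symbols $([n]\setminus B)\cup\{\ast_B\}$, where $\ast_B$ is one extra vertex. This set has at least two elements because $B\ne[n]$, which holds since $\{1,2\}\nsubseteq B$. Define $f_B:[n]\to\Delta_B$ by sending $i\notin B$ to the vertex labelled $i$ and every $i\in B$ to $\ast_B$. Thus $f_B(i)=f_B(j)$ if and only if $\{i,j\}\subseteq B$, and otherwise $\|f_B(i)-f_B(j)\|^2=\alpha_B$. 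Put $\sigma:=D^2-\sum_B\alpha_B$, which is nonnegative by \eqref{eq:mass}. If $\sigma>0$, add one more factor $\Delta_0$: a regular simplex with $n$ vertices and side length $\sqrt{\sigma}$, with $f_0(i)$ its $i$-th vertex. Finally set $F(i):=(f_0(i),(f_B(i))_B)$ in the product $P:=\Delta_0\times\prod_B\Delta_B$.

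\emph{Distance check.} In the product metric, for $i<j$ we get
\[
\|F(i)-F(j)\|^2=\sigma+\sum_{B}\alpha_B-\sum_{B\supseteq\{i,j\}}\alpha_B=D^2-\bigl(D^2-\|p_i-p_j\|^2\bigr)=\|p_i-p_j\|^2,
\]
where the middle equality uses \eqref{eq:deficits}. A finite point configuration is determined up to isometry by its pairwise distances. Hence $F([n])$ is congruent to $A$, and in particular the points $F(i)$ are distinct. By Lemma~\ref{lem:product} applied inductively,
\[
\diam(P)^2=\sigma+\sum_B\alpha_B=D^2,
\]
and $P$ is diameter-Ramsey, since each factor is by Lemma~\ref{lem:regular}.

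\emph{Transfer to $A$.} Given $q$, take $R$ with $\diam(R)=\diam(P)=D$ and $R\to(P)_q$. Every $q$-colouring of $R$ yields a monochromatic copy of $P$. That copy contains the image of $F([n])$ under the congruence, which is a monochromatic copy of $A$. Since $\diam(R)=D=\diam(A)$, the set $A$ is diameter-Ramsey.

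The main point needing care is the bookkeeping. Each block $B$ must contribute $\alpha_B$ to exactly those pairs not both in $B$, and the slack factor must absorb the gap $D^2-\sum\alpha_B$ so that the product diameter is exactly $D$ and not larger. Both are handled by the collapsing maps $f_B$ and the nonnegativity of $\sigma$. The hypothesis $\{1,2\}\nsubseteq B$ is used only to guarantee that each $\Delta_B$ has at least two vertices; the pair $\{1,2\}$ automatically has zero deficit, consistent with \eqref{eq:deficits}.
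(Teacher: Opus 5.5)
Your proposal is correct and follows essentially the same route as the paper. It uses the same collapsing construction: one regular simplex of side $\sqrt{\alpha_B}$ on the vertex set $([n]\setminus B)\cup\{\ast_B\}$ for each block with $\alpha_B>0$, plus a reserve regular simplex of side $\sqrt{D^2-\sum_B\alpha_B}$, followed by the same distance and diameter bookkeeping via Lemma~\ref{lem:product} and the same transfer from the product to $A$. The only differences are cosmetic: you omit the reserve factor when the slack is zero where the paper uses a singleton, and you obtain $\diam(P)=D$ directly from the product diameter formula rather than also exhibiting $\|q_1-q_2\|=D$.
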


\begin{proof}
Define the reserve mass
\[
\alpha_0:=D^2-\sum_{B\in\mathcal B}\alpha_B\ge 0.
\]
We shall realize $A$ inside a finite Cartesian product of regular simplices whose diameter is $D$.

If $\alpha_0>0$, let $S_0$ be a regular $(n-1)$-simplex of side length $\sqrt{\alpha_0}$ with distinct vertices
\[
v_1^{(0)},\dots,v_n^{(0)}.
\]
If $\alpha_0=0$, let $S_0$ be a singleton and put $v_1^{(0)}=\cdots=v_n^{(0)}$ equal to its unique point.

For each $B\in\mathcal B$ with $\alpha_B>0$, let $S_B$ be a regular simplex of side length $\sqrt{\alpha_B}$ with vertex set
\[
\{u_B\}\cup\{v_i^{(B)}:i\notin B\}.
\]
This is possible because the displayed set has $n-|B|+1$ vertices. Define
\[
q_i^{(B)}:=
\begin{cases}
 u_B, & i\in B,\\[1mm]
 v_i^{(B)}, & i\notin B,
\end{cases}
\qquad (i\in[n]),
\]
and set $q_i^{(0)}:=v_i^{(0)}$.

Now put
\[
\mathcal R:=S_0\times \prod_{\substack{B\in\mathcal B\\ \alpha_B>0}} S_B
\]
and
\[
q_i:=\bigl(q_i^{(0)},(q_i^{(B)})_B\bigr)\in\mathcal R
\qquad (i\in[n]).
\]

Fix $1\le i<j\le n$. In the factor $S_B$, the points $q_i^{(B)}$ and $q_j^{(B)}$ coincide if and only if $\{i,j\}\subseteq B$; otherwise they are distinct vertices of a regular simplex and are at squared distance $\alpha_B$. Therefore
\begin{align*}
\|q_i-q_j\|^2
&=\alpha_0+
\sum_{\substack{B\in\mathcal B\\ \alpha_B>0\\ \{i,j\}\nsubseteq B}}\alpha_B \\
&=D^2-
\sum_{\substack{B\in\mathcal B\\ \{i,j\}\subseteq B}}\alpha_B \\
&=\|p_i-p_j\|^2,
\end{align*}
where the last equality is \eqref{eq:deficits}. Thus $\{q_1,\dots,q_n\}$ is congruent to $A$.

Moreover, no set $B\in\mathcal B$ contains both $1$ and $2$. Hence $q_1$ and $q_2$ are distinct in every nontrivial factor, and
\[
\|q_1-q_2\|^2
=\alpha_0+
\sum_{B\in\mathcal B}\alpha_B
=D^2.
\]
The same sum is the squared diameter of the product $\mathcal R$, so
\[
\diam(\mathcal R)=D.
\]

By Lemma~\ref{lem:regular}, every nontrivial factor of $\mathcal R$ is diameter-Ramsey, and a singleton is trivially diameter-Ramsey. Repeated use of Lemma~\ref{lem:product} shows that $\mathcal R$ is diameter-Ramsey. Therefore, for every positive integer $q$, there exists a finite set $W_q$ such that
\[
\diam(W_q)=\diam(\mathcal R)=D
\qquad\text{and}\qquad
W_q\to(\mathcal R)_q.
\]
Since $\mathcal R$ contains the subset $\{q_1,\dots,q_n\}$ congruent to $A$, every monochromatic copy of $\mathcal R$ contains a monochromatic copy of $A$. Hence $W_q\to(A)_q$ for every $q$, and $A$ is diameter-Ramsey.
\end{proof}

\begin{remark}\label{rem:pairwise}
If only two-element subsets are used in Theorem~\ref{thm:criterion}, namely if
\[
\alpha_{\{i,j\}}=D^2-\|p_i-p_j\|^2
\qquad ((i,j)\ne(1,2)),
\]
and all other coefficients are zero, then \eqref{eq:mass} becomes
\[
\sum_{1\le i<j\le n}\bigl(D^2-\|p_i-p_j\|^2\bigr)\le D^2,
\]
because the $(1,2)$-term is zero. This is exactly the sufficient condition proved by Frankl--Pach--Reiher--R\"odl in \cite[Section~4]{FPRR}.
\end{remark}

\section{A counterexample family}
We now specialize the construction in the proof of Theorem~\ref{thm:criterion} to a simple three-parameter family. The first parameter is denoted by $s$ to avoid confusing it with the number of colors in the Ramsey property.

\begin{proposition}\label{prop:familyDR}
Let $d\ge 3$ and let $s,t,u>0$. Then there exists a diameter-Ramsey $d$-simplex
\[
A_d(s,t,u)=\{p_1,\dots,p_{d+1}\}
\]
with squared edge lengths
\begin{align}
\|p_1-p_2\|^2&=\|p_1-p_j\|^2=s+t+u && (4\le j\le d+1), \label{eq:family1}\\
\|p_1-p_3\|^2&=s+u, \label{eq:family2}\\
\|p_i-p_j\|^2&=s+t && (2\le i<j\le d+1). \label{eq:family3}
\end{align}
\end{proposition}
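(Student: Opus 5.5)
Since the proposition is stated as a specialization of the construction in the proof of Theorem~\ref{thm:criterion}, the plan is to realize $A_d(s,t,u)$ directly as a subset of a product of three regular simplices. By Lemmas~\ref{lem:regular} and~\ref{lem:product}, this product is diameter-Ramsey, and I will check that its diameter equals that of the simplex. Set $n=d+1$ and $D^2=s+t+u$. Note that $D^2$ is the largest squared distance listed, since $s+u<s+t+u$ and $s+t<s+t+u$.

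The deficits are as follows: $\delta_{13}=t$, $\delta_{ij}=u$ for $2\le i<j\le n$, and all other deficits (the pairs $\{1,j\}$ with $j\ne 3$) are zero. I would take $\alpha_{\{1,3\}}=t$, $\alpha_{\{2,\dots,n\}}=u$, and all other coefficients zero. Neither set contains $\{1,2\}$, and the pair $\{1,3\}$ lies only in the first set. A pair $\{i,j\}$ with $i,j\ge2$ lies only in the second set. A pair $\{1,j\}$ with $j\ne3$ lies in neither. So \eqref{eq:deficits} holds. The total mass is $t+u\le D^2$, with reserve $\alpha_0=s>0$. Theorem~\ref{thm:criterion} then gives diameter-Ramseyness at once, provided a simplex with these edge lengths exists.

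Existence is therefore the real content. The construction in the proof of Theorem~\ref{thm:criterion} supplies points $q_1,\dots,q_n$ in the product $S_0\times S_{\{1,3\}}\times S_{\{2,\dots,n\}}$, where $S_0$ is a regular $d$-simplex of side $\sqrt s$. These points realize exactly the prescribed squared distances. It remains to show that they are affinely independent.

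For this I would project onto the factor $S_0$. There the images $q_i^{(0)}=v_i^{(0)}$ are the $n=d+1$ distinct vertices of a regular $d$-simplex, because $s>0$, so they are affinely independent. An affine dependence $\sum\lambda_i q_i=0$ with $\sum\lambda_i=0$ would project to the same dependence among the $v_i^{(0)}$. That forces all $\lambda_i=0$, so the $q_i$ are affinely independent. The diameter of the point set equals $\|q_1-q_2\|=D$, matching the diameter of the product.

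I expect no genuine obstacle. The only points needing care are that affine independence comes for free from the positive reserve $s$ (which is why $s>0$ is required), and that $D^2$ is indeed the maximum distance, which uses $t,u>0$.
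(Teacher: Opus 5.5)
Your proposal is correct and follows essentially the same route as the paper. You use the same coefficients $\alpha_{\{1,3\}}=t$ and $\alpha_{\{2,\dots,d+1\}}=u$ with reserve $\alpha_0=s$, realize the points inside the product of regular simplices from the proof of Theorem~\ref{thm:criterion}, and get affine independence by projecting onto the $S_0$ factor. You also check explicitly that $s+t+u$ is the largest squared distance, so that $D=\diam(A)=\|p_1-p_2\|$, which the paper leaves implicit.
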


\begin{proof}
Run the construction from the proof of Theorem~\ref{thm:criterion} with $n=d+1$ and
\[
D^2:=s+t+u.
\]
Choose the only nonzero higher-order coefficients to be
\[
\alpha_{\{1,3\}}=t,
\qquad
\alpha_{\{2,3,\dots,d+1\}}=u.
\]
The total non-reserve mass is $t+u\le s+t+u=D^2$, so the reserve mass is $\alpha_0=s$.

The construction gives points $q_1,\dots,q_{d+1}$ in a Cartesian product of regular simplices such that
\[
D^2-\|q_1-q_3\|^2=t,
\qquad
D^2-\|q_i-q_j\|^2=u
\quad (2\le i<j\le d+1),
\]
and all remaining deficits are zero. Equivalently, the squared distances among the $q_i$ are precisely those listed in \eqref{eq:family1}--\eqref{eq:family3}. The product $\mathcal R$ used in this construction has diameter $D$ and is diameter-Ramsey; since it contains $\{q_1,\dots,q_{d+1}\}$, the final argument in the proof of Theorem~\ref{thm:criterion} shows that this set is diameter-Ramsey.

It remains only to check that these points form a $d$-simplex. Since $\alpha_0=s>0$, the projection onto the $S_0$-factor maps $q_1,\dots,q_{d+1}$ to the vertices of a regular $d$-simplex. Any affine dependence among $q_1,\dots,q_{d+1}$ would project to an affine dependence among those $d+1$ regular-simplex vertices. Hence no nontrivial affine dependence exists, and the points form a $d$-simplex. We denote this simplex by $A_d(s,t,u)$.
\end{proof}

Next we locate the circumcenter.

\begin{proposition}\label{prop:bary}
Let $d\ge 3$, and let $A_d(s,t,u)$ be the simplex from Proposition~\ref{prop:familyDR}. Write the circumcenter in barycentric coordinates as
\[
O=\lambda_1p_1+\cdots+\lambda_{d+1}p_{d+1}.
\]
Then
\[
\lambda_2=\lambda_4=\cdots=\lambda_{d+1},
\]
and, more precisely,
\begin{align*}
\lambda_1&=\frac{(s+t)(s+du)}{\Delta_d},\\[1mm]
\lambda_2=\lambda_4=\cdots=\lambda_{d+1}
&=\frac{(s+2t)(s+u)}{\Delta_d},\\[1mm]
\lambda_3&=\frac{s^2+st+su-(d-2)tu}{\Delta_d},
\end{align*}
where
\[
\Delta_d:=(d+1)s^2+2d(st+su+tu).
\]
Consequently, the circumcenter lies outside $\conv(A_d(s,t,u))$ whenever
\begin{equation}\label{eq:outsidecondition}
(d-2)tu>s(s+t+u).
\end{equation}
\end{proposition}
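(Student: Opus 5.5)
We will compute the barycentric coordinates of the circumcenter directly from the Gram-type linear system. Write $a_{ij}:=\|p_i-p_j\|^2$. A standard fact is that $O=\sum\lambda_ip_i$ with $\sum\lambda_i=1$ is the circumcenter if and only if there is a constant $c$ (namely $c=2R^2$) with
\[
\sum_{j}a_{ij}\lambda_j=c\qquad(1\le i\le d+1).
\]
This follows from $\|p_i-O\|^2=\sum_j\lambda_j a_{ij}-\tfrac12\sum_{j,k}\lambda_j\lambda_k a_{jk}$, since the second term does not depend on $i$. The Cayley--Menger matrix is nonsingular for a simplex, so the solution is unique. Hence it suffices to exhibit one solution of this system and normalize it.

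For the symmetry claim, note that the transpositions of the indices $\{2,4,5,\dots,d+1\}$ preserve every squared distance in \eqref{eq:family1}--\eqref{eq:family3}. By uniqueness, the solution is invariant under these permutations, so $\lambda_2=\lambda_4=\cdots=\lambda_{d+1}=:\mu$. Setting $\lambda_1=x$ and $\lambda_3=y$, the system reduces to three equations: the row for $i=1$, the row for $i=3$, and the row for one index $i\in\{2,4,\dots\}$. With $D^2=s+t+u$, these read
\begin{align*}
(d-1)\mu D^2+y(s+u)&=c,\\
x(s+u)+(d-1)\mu(s+t)&=c,\\
xD^2+y(s+t)+(d-2)\mu(s+t)&=c.
\end{align*}
Rather than solve this system, we will verify it. We substitute the claimed unnormalized values $x=(s+t)(s+du)$, $\mu=(s+2t)(s+u)$ and $y=s^2+st+su-(d-2)tu$, and check that the three left-hand sides expand to the same polynomial in $s,t,u,d$. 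We also check that $x+(d-1)\mu+y=\Delta_d$. This confirms both the formulas and the normalization. These polynomial expansions are the main computational step and the likeliest source of slips. To keep them manageable, I would first subtract pairs of equations, for instance the second from the first, which gives $(d-1)\mu u=(s+u)(x-y)$ after using $D^2-(s+t)=u$. Each such difference is a low-degree identity that is easy to confirm, and then only one full expansion is needed to match $c$.

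Finally, note that $\Delta_d>0$, and that $x>0$ and $\mu>0$ since $s,t,u>0$. A point lies in $\conv(A)$ exactly when all its barycentric coordinates are nonnegative. Therefore $O\notin\conv(A_d(s,t,u))$ if and only if $\lambda_3<0$, that is, $(d-2)tu>s^2+st+su=s(s+t+u)$, which is \eqref{eq:outsidecondition}.
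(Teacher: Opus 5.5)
Your proposal is correct and follows essentially the paper's route: the same linear system $\sum_j\lambda_j\|p_i-p_j\|^2=2\rho^2$ is reduced to the rows $i=1,3,2$ and then to the same two pairwise differences, and the sign analysis of $\lambda_3$ at the end is identical. The differences are cosmetic. You get $\lambda_2=\lambda_4=\cdots=\lambda_{d+1}$ from uniqueness of the solution of the nonsingular Cayley--Menger system rather than from rigidity of the simplex, and you verify the claimed values instead of solving for them; the verification goes through, since $x-y=(d-1)u(s+2t)$, $\mu-y=t(s+du)$ and $x+(d-1)\mu+y=\Delta_d$.
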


\begin{proof}
The distance data in \eqref{eq:family1}--\eqref{eq:family3} are invariant under every permutation of the indices $2,4,\dots,d+1$. The labelled configurations before and after such a permutation have the same distance matrix; by rigidity of Euclidean simplices, the permutation is induced by an isometry of the affine span. This isometry fixes the circumcenter, and the barycentric representation in a simplex is unique. Hence the barycentric coordinates corresponding to the indices $2,4,\dots,d+1$ are equal. Write
\[
\lambda_1=a,
\qquad
\lambda_3=c,
\qquad
\lambda_2=\lambda_4=\cdots=\lambda_{d+1}=b.
\]

Let $\rho$ be the circumradius and translate the circumcenter to the origin. Then $\|p_i\|=\rho$ for every $i$, and
\[
\|p_i-p_j\|^2=2\rho^2-2\langle p_i,p_j\rangle.
\]
Since $\sum_j\lambda_jp_j=0$ and $\sum_j\lambda_j=1$, we obtain
\begin{equation}\label{eq:circsys}
\sum_{j=1}^{d+1}\lambda_j\|p_i-p_j\|^2=2\rho^2
\qquad (1\le i\le d+1).
\end{equation}

Using \eqref{eq:family1}--\eqref{eq:family3}, the cases $i=1$, $i=3$, and a representative case $i=2$ among the indices $2,4,\dots,d+1$ of \eqref{eq:circsys} give
\begin{align}
(s+u)c+(d-1)(s+t+u)b&=2\rho^2, \label{eq:bary1}\\
(s+u)a+(d-1)(s+t)b&=2\rho^2, \label{eq:bary2}\\
(s+t+u)a+(s+t)c+(d-2)(s+t)b&=2\rho^2. \label{eq:bary3}
\end{align}
The normalization condition is
\begin{equation}\label{eq:bary4}
a+c+(d-1)b=1.
\end{equation}
Subtracting \eqref{eq:bary1} from \eqref{eq:bary2} and subtracting \eqref{eq:bary2} from \eqref{eq:bary3} reduce the system to
\[
(s+u)(a-c)=(d-1)ub,
\qquad
(s+t)(b-c)=ta,
\qquad
 a+c+(d-1)b=1.
\]
Solving these three linear equations gives
\begin{align*}
a&=\frac{(s+t)(s+du)}{\Delta_d},\\[1mm]
b&=\frac{(s+2t)(s+u)}{\Delta_d},\\[1mm]
c&=\frac{s^2+st+su-(d-2)tu}{\Delta_d},
\end{align*}
with
\[
\Delta_d=(d+1)s^2+2d(st+su+tu).
\]
This proves the displayed formulae.

Because $s,t,u>0$, the denominator $\Delta_d$ is positive, and the factorizations of the numerators of $\lambda_1$ and $\lambda_2$ show that they are positive. Thus the only barycentric coordinate that can be negative is $\lambda_3$. A point belongs to the convex hull of a simplex if and only if all of its barycentric coordinates are nonnegative. Therefore the circumcenter lies outside $\conv(A_d(s,t,u))$ exactly when $\lambda_3<0$, which is equivalent to \eqref{eq:outsidecondition}.
\end{proof}

Combining the preceding two propositions yields the desired counterexamples.

\begin{theorem}\label{thm:main}
For every integer $d\ge 3$, there exists a diameter-Ramsey $d$-simplex whose circumcenter lies outside its convex hull.
\end{theorem}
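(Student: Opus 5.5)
The plan is to combine Proposition~\ref{prop:familyDR} and Proposition~\ref{prop:bary}: choose explicit positive parameters $s,t,u$ for which condition \eqref{eq:outsidecondition} holds for every $d\ge 3$.

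Since $d-2\ge 1$, it suffices to find $s,t,u>0$ with $tu>s(s+t+u)$. The natural choice matching the abstract is $s=1$, $t=3$, $u=3$. Then $tu=9$ and $s(s+t+u)=7$, so $(d-2)tu\ge 9>7$ for all $d\ge3$. With these values \eqref{eq:family1}--\eqref{eq:family3} give squared edge lengths $7$, $4$ and $4$, which is exactly the family displayed in the abstract.

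The argument then runs in three steps.
\begin{itemize}
\item Fix $d\ge 3$ and set $(s,t,u)=(1,3,3)$.
\item Invoke Proposition~\ref{prop:familyDR} to obtain the diameter-Ramsey $d$-simplex $A_d(1,3,3)$.
\item Invoke Proposition~\ref{prop:bary}: its barycentric coordinate $\lambda_3$ has numerator $s^2+st+su-(d-2)tu=7-9(d-2)<0$ and positive denominator $\Delta_d$, so the circumcenter lies outside $\conv(A_d(1,3,3))$.
\end{itemize}
As a sanity check, one can also confirm that $D^2=7$ really is the diameter, i.e.\ that it is the largest squared distance. This is already built into the construction, since $\operatorname{diam}(\mathcal R)=D$.

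There is no real obstacle: all the substance lies in the two propositions. The only care needed is to pick the parameters uniformly in $d$. This is why I would use the weakest case $d=3$ as the binding constraint, namely $tu>s(s+t+u)$, and then note that the left side of \eqref{eq:outsidecondition} only grows with $d$.
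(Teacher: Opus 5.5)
Your proposal is correct and is essentially the paper's own proof: the paper also fixes $(s,t,u)=(1,3,3)$, gets the diameter-Ramsey property from Proposition~\ref{prop:familyDR}, and uses Proposition~\ref{prop:bary} to obtain $\lambda_3=(25-9d)/(31d+1)<0$. This agrees with your numerator $7-9(d-2)=25-9d$. Your remark that $tu>s(s+t+u)$, which is the $d=3$ case, already forces \eqref{eq:outsidecondition} for all $d\ge 3$ is a clean way to see that the single choice of parameters works uniformly in $d$.
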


\begin{proof}
Take $s=1$ and $t=u=3$. By Proposition~\ref{prop:familyDR}, there exists a diameter-Ramsey $d$-simplex with squared edge lengths
\[
\begin{aligned}
\|p_1-p_2\|^2&=\|p_1-p_j\|^2=7 && (4\le j\le d+1),\\
\|p_1-p_3\|^2&=4,\\
\|p_i-p_j\|^2&=4 && (2\le i<j\le d+1).
\end{aligned}
\]
For this simplex, Proposition~\ref{prop:bary} gives
\[
\lambda_3=
\frac{1+3+3-(d-2)\cdot 9}{(d+1)+2d(3+3+9)}
=\frac{25-9d}{31d+1}<0
\qquad (d\ge 3).
\]
Thus the circumcenter lies outside the convex hull of the simplex.
\end{proof}

\begin{corollary}\label{cor:strict}
For every integer $d\ge 3$, there exists a diameter-Ramsey $d$-simplex that is not covered by the pairwise deficit criterion from \cite[Section~4]{FPRR}.
\end{corollary}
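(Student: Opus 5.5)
The plan is to take the simplex from Theorem~\ref{thm:main}, i.e.\ $A_d(1,3,3)$ with squared edge lengths $7,4,4$ as listed there. By Proposition~\ref{prop:familyDR} it is diameter-Ramsey, so it only remains to check that it violates the pairwise deficit condition from \cite[Section~4]{FPRR}, in the form recorded in Remark~\ref{rem:pairwise}:
\[
\sum_{1\le i<j\le n}\bigl(D^2-\|p_i-p_j\|^2\bigr)\le D^2 .
\]

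First I will identify the diameter. The squared distances take only the values $7$ and $4$, so $D^2=7$, attained for instance by the pair $\{p_1,p_2\}$. The left-hand side above is a sum over all unordered pairs. Hence it does not depend on which diameter pair is labelled $\{1,2\}$, because every diameter pair contributes zero. This means no choice of relabelling can rescue the criterion.

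Next I will count the deficits. The pair $\{1,3\}$ has deficit $7-4=3$. Every pair $\{i,j\}$ with $2\le i<j\le d+1$ has deficit $7-4=3$, and there are $\binom{d}{2}$ such pairs. All remaining pairs, namely $\{1,2\}$ and $\{1,j\}$ for $j\ge4$, have deficit $0$. Therefore
\[
\sum_{i<j}\bigl(D^2-\|p_i-p_j\|^2\bigr)=3+3\binom{d}{2}\ \ge\ 3+3\cdot 3=12>7=D^2
\qquad(d\ge3).
\]
So the pairwise criterion fails, while the simplex is diameter-Ramsey.

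There is no genuine obstacle here. The only point needing care is to ensure that we compare against the criterion exactly as formulated in \cite{FPRR}: a sum over all pairs, required to be bounded by $D^2$. It must not be a labelling-dependent variant. This is handled by the observation that diameter pairs contribute zero deficit. As a remark, the same simplex also has its circumcenter outside the convex hull by Proposition~\ref{prop:bary}, so the corollary shows that Theorem~\ref{thm:criterion} strictly extends the pairwise criterion.
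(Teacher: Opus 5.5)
Your proposal is correct and follows the paper's proof: you take the simplex of Theorem~\ref{thm:main}, note $D^2=7$, and compute the total pairwise deficit $3+3\binom{d}{2}>7$ for $d\ge 3$, so the condition in Remark~\ref{rem:pairwise} fails. Your extra observation that this sum does not depend on which diameter pair is labelled $\{1,2\}$ is correct; the paper leaves it implicit.
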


\begin{proof}
For the simplex in Theorem~\ref{thm:main}, we have $D^2=7$,
\[
D^2-\|p_1-p_3\|^2=3,
\qquad
D^2-\|p_i-p_j\|^2=3
\quad (2\le i<j\le d+1),
\]
and all other pairwise deficits are zero. Therefore
\[
\sum_{1\le i<j\le d+1}\bigl(D^2-\|p_i-p_j\|^2\bigr)
=3+3\binom d2
>7=D^2
\qquad (d\ge 3).
\]
Hence the pairwise criterion stated in Remark~\ref{rem:pairwise} does not apply.
\end{proof}

\begin{example}[A concrete tetrahedron]\label{ex:tetra}
For $d=3$, the above construction gives a tetrahedron with squared edge lengths
\[
\begin{aligned}
\|p_1-p_2\|^2&=\|p_1-p_4\|^2=7,\\
\|p_1-p_3\|^2&=\|p_2-p_3\|^2=\|p_2-p_4\|^2=\|p_3-p_4\|^2=4.
\end{aligned}
\]
Its circumcenter has barycentric coordinates
\[
\left(\frac{20}{47},\frac{14}{47},-\frac{1}{47},\frac{14}{47}\right),
\]
so it lies outside the tetrahedron.

One explicit realization is
\[
p_2=(0,0,0),
\qquad
p_3=(2,0,0),
\qquad
p_4=(1,\sqrt{3},0),
\]
\[
p_1=\left(\frac{7}{4},\frac{\sqrt{3}}{12},\sqrt{\frac{47}{12}}\right).
\]
The circumcenter is
\[
O=\left(1,\frac{\sqrt{3}}{3},\frac{10\sqrt{141}}{141}\right).
\]
Its barycentric coordinate at $p_3$ is negative, so $O$ lies across the face
\[
\conv\{p_1,p_2,p_4\}.
\]
\end{example}

\section{Final remarks}
The Corsten--Frankl conjecture therefore fails in every dimension $d\ge 3$. In particular, the position of the circumcenter alone does not characterize diameter-Ramsey simplices. Theorem~\ref{thm:criterion} suggests that higher-order deficit decompositions may be a useful tool for studying diameter-Ramsey simplices.

The argument here does not address the planar case $d=2$.

\end{document}